\newtheorem{theorem}{Theorem}[section]
\newtheorem{thm}[theorem]{Theorem}
\newtheorem{lemma}[theorem]{Lemma}
\theoremstyle{definition}
\newtheorem{example}[theorem]{Example}
\theoremstyle{remark}
\numberwithin{equation}{section}
\def\<{\langle}
\def\>{\rangle}
\def\R{\mathcal R}
\def\ln{\langle\!\langle}
\def\rn{\rangle\!\rangle}
\def\-{\underline}
\def\Z{\mathbb Z}
\def\N{\mathbb N}
\def\G{\Gamma}
\def\serieslogo@{\relax}
\def\@setcopyright{\relax}
\begin{document}

\title{Direct factors of profinite completions and decidability}
 
\author[Martin R. Bridson]{Martin R.~Bridson}
\address{Mathematical Institute, 24-29 St Giles', Oxford OX1 3LB, UK} 
\email{bridson@maths.ox.ac.uk} 

\date{19 March 2008}

\subjclass[2000]{20E18, 20F10}

\keywords{Profinite groups, direct factors, decidability}

\thanks{The author's research is funded 
by a Senior Fellowship from the EPSRC of Great Britain.}

\begin{abstract} We consider finitely presented,
residually finite groups $G$ and  
 finitely generated normal subgroups
 $A$ such that the inclusion $A\hookrightarrow G$
induces an isomorphism from the profinite completion of $A$ to a direct
factor of the profinite completion of $G$.
We explain why $A$ need not be a direct factor of a subgroup of finite index in $G$; indeed $G$
need not have a subgroup of finite index that splits as a non-trivial direct product. We prove that there 
is no algorithm that can determine whether $A$
is a direct factor of a subgroup of finite index in $G$.
\end{abstract}

\maketitle

Let $G$ be a finitely generated residually finite group. The inclusion $A\hookrightarrow G$
of any finitely generated subgroup
induces a morphism of profinite completions $\iota : \hat A\to \hat G$. If
$A$ is a direct factor of $G$ then $\iota$ is injective and we can identify the closure 
$\overline A$ of $\iota(A)$ with $\hat A$. In \cite{NS} Nikolov and Segal answered a 
question of Goldstein and Guralnick \cite{GG} by showing that the converse
of the preceding observation is false: there exist pairs of finitely generated residually
finite groups $A\hookrightarrow G$, with $A$ is normal in $G$, such
that $\iota : \hat A \to \overline A$ is an isomorphism, $\overline A$ is a direct
factor of $\hat G$, but $A$ is not a direct factor of $G$, nor indeed of any subgroup of
finite index in $G$.

Nikolov and Segal proved this by exhibiting an explicit group of the form $G=A\rtimes_\alpha\Z$, where
$A$ is finitely generated and $\alpha$, although not inner, induces an inner automorphism
on $A/N$ for every $\alpha$-invariant subgroup of finite index  $N\subset A$.

The first purpose of the present note is to explain how pairs of
residually finite groups $A\hookrightarrow G$ settling the Goldstein-Guralnick question
also arise from the constructions in \cite{BG}.  As well as providing a broader range of examples,
these constructions allow one to impose extra conditions on $A$ and $G$ (see subsection \ref{exs}).
 For example,
one can require $G$ to be finitely presented, indeed to be a direct product of torsion-free
hyperbolic groups and hence
have a finite classifying space. If one drops the requirement that $A$ be normal, one can
arrange for both $A$ and $G$ to be finitely presented.

Our main construction also provides a large classes of examples of the
Nikolov-Segal type $G=A\rtimes_\alpha\Z$; see subsection \ref{ss:NS}.

\smallskip

The second part of this note concerns the decision
problem associated to Goldstein and Guralnick's question:
a group $G$ 
 is given by a finite
presentation $G=\langle X\cup \mathcal A\mid R\>$;
it is guaranteed that $G$ is residually finite and that the subgroup $A\subset G$
generated by $\mathcal A$ has the property that the
inclusion map induces an isomorphism
from $\hat A$ to a direct factor of $\hat G$;
is there an algorithm that, given this data, can determine
whether or not
 $A$ is a direct factor of a subgroup of finite
index in $G$?

\begin{thm}\label{t:noAlgo} There does not exist
an algorithm that, given the above data,
can determine whether or not $A$ is a direct factor 
of  any subgroup of finite index in $G$.
\end{thm}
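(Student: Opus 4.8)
The plan is to prove undecidability by reduction from a problem already known to be algorithmically unsolvable. I would take a recursive sequence of finite presentations of groups $Q_n=\langle Y_n\mid S_n\rangle$ with the property that $\{n : Q_n=1\}$ is recursively enumerable but not recursive; the Adian--Rabin theorem supplies such a sequence, and, using the standard devices that underlie the fibre-product constructions of \cite{BG}, one may arrange in addition that each $Q_n$ has no nontrivial finite quotients, so that $\hat Q_n=1$ for every $n$. For each $n$ the main construction (the Nikolov--Segal type examples $G=A\rtimes_\alpha\Z$) is then applied to $Q_n$ to produce, algorithmically, a pair $A_n\hookrightarrow G_n$ with $G_n=A_n\rtimes_{\alpha_n}\Z$, where $A_n$ and the automorphism $\alpha_n$ depend recursively on the presentation of $Q_n$.

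The first thing to check is that the hypotheses of the theorem hold for \emph{every} $n$, not merely for the ``yes'' instances: I would verify that each $G_n$ is finitely presented and residually finite, and that $\alpha_n$ induces an inner automorphism on $A_n/N$ for every $\alpha_n$-invariant subgroup of finite index $N\subset A_n$. This last property is exactly what makes $\iota:\hat A_n\to\hat G_n$ identify $\hat A_n$ with a direct factor of $\hat G_n$, so it is essential that it hold unconditionally; this is where the hypothesis $\hat Q_n=1$ is used, and it must be guaranteed by the construction regardless of whether $Q_n$ is trivial.

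The heart of the argument is the equivalence between the decision question and the behaviour of $\alpha_n$. Because $G_n/A_n\cong\Z$ and $A_n$ is normal, any subgroup of finite index in which $A_n$ can sit as a direct factor must contain $A_n$ and hence equal $\langle A_n,t^k\rangle=A_n\rtimes_{\alpha_n^k}\Z$ for some $k\ge 1$; and $A_n\rtimes_{\alpha_n^k}\Z$ splits as $A_n\times\Z$ precisely when $\alpha_n^k$ is an inner automorphism of $A_n$ (replace the generator $t^k$ by $t^k\alpha_n^{-k}(b^{-1})$ when $\alpha_n^k=\mathrm{conj}_b$). Thus $A_n$ is a direct factor of a subgroup of finite index in $G_n$ if and only if some positive power of $\alpha_n$ is inner. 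I would design the construction so that this condition is equivalent to $Q_n=1$: innerness of a power of $\alpha_n$ is forced when $Q_n$ is trivial, while if $Q_n\ne 1$ one shows that no power of $\alpha_n$ can be inner.

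Granting this equivalence, the theorem follows at once: an algorithm solving the direct-factor problem could be composed with the recursive map $n\mapsto(G_n,A_n)$ to decide membership in $\{n : Q_n=1\}$, contradicting the choice of the sequence. The main obstacle is the ``hard direction'' of the final equivalence --- proving that when $Q_n$ is nontrivial no positive power of $\alpha_n$ is inner --- carried out simultaneously with the unconditional verification of residual finiteness and of the profinite splitting for every input $n$. Balancing these two demands, namely a robust profinite direct factor for all $n$ yet a genuine algebraic splitting only when $Q_n=1$, is precisely the delicate point that the construction must be engineered to achieve.
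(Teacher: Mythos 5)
Your reduction template is sound, and your equivalence lemma --- that $A_n$ is a direct factor of a finite-index subgroup of $G_n=A_n\rtimes_{\alpha_n}\Z$ if and only if some positive power of $\alpha_n$ is inner --- is correct and cleanly argued. But the construction you propose to feed into it cannot be implemented as described, for two concrete reasons. First, the theorem's data require a \emph{finite presentation} of $G_n$. In the Nikolov--Segal-type examples, $A_n$ is the Rips kernel $N_n$, which is finitely generated but (when $Q_n\neq 1$) not finitely presentable; there is no reason why $N_n\rtimes_{\alpha_n}\Z$ should be finitely presentable (finite generation of the kernel of a map to $\Z$ does not yield finite presentability of the extension, and the conjugation relations $t\nu_it^{-1}=u_i$, $t^{-1}\nu_it=v_i$ present an HNN extension of a free group, not $N_n\rtimes\Z$), and in any case you give no algorithm producing such a presentation from $n$ --- your ``I would verify that each $G_n$ is finitely presented'' is precisely the unproved step. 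Second, the construction degenerates exactly at the ``yes'' instances: when $Q_n=1$ one has $N_n=\Gamma_n$, so there is no $\gamma\in\Gamma_n\smallsetminus N_n$ and hence no $\alpha_n$ at all. If instead you take $\gamma$ to be the image of a fixed word, then for those $n$ with $Q_n\neq 1$ but with the image of $\gamma$ trivial in $Q_n$ you get $\langle N_n,\gamma\rangle=N_n=A_n$, a trivially positive instance, breaking the intended equivalence with $Q_n=1$ --- and deciding whether the image of $\gamma$ is nontrivial in $Q_n$ is itself undecidable, so this cannot be patched by case analysis. (A further, smaller, point: Adian--Rabin alone does not supply groups with no nontrivial finite quotients, nor the torsion-freeness needed to ensure the stable letter has infinite-order image; the paper quotes a bespoke result, Theorem \ref{t:triv} of \cite{PFdecide}, precisely because killing finite quotients and arranging $H_1=H_2=0$ while \emph{preserving} undecidability of triviality requires a separate construction.)

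The paper's actual proof sidesteps both obstacles by a different choice of obstruction. It sets $G_n=\Gamma_n\times\Z$ and $A_n=N_n\times\{1\}$, so $G_n$ is manifestly finitely presented and uniformly constructible for every $n$, and the promise $\hat A_n\cong\overline A_n$ a direct factor of $\hat G_n=\hat\Gamma_n\times\hat\Z$ holds unconditionally (trivially if $Q_n=1$; via Lemma \ref{PT}, using $H_2(Q_n;\Z)=0$ and the absence of finite quotients, if $Q_n\neq1$). The dichotomy is then detected not by innerness of automorphisms but by finite presentability: a direct factor of a finite-index subgroup of a finitely presented group is finitely presented, whereas $N_n$ is not finitely presentable when $Q_n\neq1$ (Bieri's theorem), while $Q_n=1$ makes $A_n=\Gamma_n\times\{1\}$ a visible factor. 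Your innerness dichotomy is exactly the content of the paper's subsection on examples of type $A\rtimes_\alpha\Z$, but there it is used only to exhibit individual examples, not to run the reduction --- precisely because, as above, it does not uniformize over a recursive family including the trivial instances.
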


Theorem \ref{t:precise} below provides a
more precise formulation of this result.
   
\section{Building Examples}

In \cite{BG} Bridson and Grunewald settled a question of Grothendieck \cite{groth} by
constructing pairs of finitely presented, residually finite groups $j: P\hookrightarrow\Gamma$ such
that $\hat j :\hat P\to\hat\Gamma$ is an isomorphism but $P$ is not isomorphic (or even
quasi-isometric to) $\Gamma$. Pairs of finitely generated groups with this property had
been found earlier by Platonov and Tavkin \cite{PT}, Bass and Lubotzky \cite{BL},
and Pyber \cite{pyber}. A  simplified form of the arguments given
in \cite{BG} provides a flexible technique for constructing finitely generated
examples of a different type. The purpose of this section is to explain these
arguments and to apply them to the study of direct factors of profinite groups.

\subsection{The Basic Construction}

\begin{theorem}\label{main}
If $Q$ is a finitely presented group that is infinite but has
no non-trivial finite quotients, and if $H_2(Q,\Z)=0$, then there is a short exact sequence
of groups $1\to N \to \G\to Q\to 1$ where $\G$ is finitely presented and residually finite,
$N$ is finitely generated
but not finitely presentable, and  $N\to\G$ induces an isomorphism of profinite completions
$\hat N\to \hat\G$. Moreover there exists an algorithm that, given a finite presentation
of $Q$, will construct a finite presentation of $\Gamma$ and a generating set for $N$.
\end{theorem}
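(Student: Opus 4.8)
The plan is to build the sequence with a residually finite form of Rips's construction and then to verify the profinite isomorphism by a homological splitting argument; it is the latter, where the hypotheses $\hat Q=1$ and $H_2(Q,\Z)=0$ are consumed, that I expect to be the crux. First I would feed the given finite presentation of $Q$ into the Rips construction, in the variant that outputs a $C'(1/6)$ small-cancellation (hence torsion-free hyperbolic) group $\G$ together with a short exact sequence $1\to N\to\G\to Q\to 1$ in which $N$ is generated by two explicit elements. This is algorithmic: the relators of $\G$ are produced from those of $Q$ by the standard recipe, and the two generators of $N$ are read off directly. Residual finiteness is not automatic for hyperbolic groups, so I would take the refinement of the construction whose output is virtually special (equivalently embeds in a right-angled Artin group), for which residual finiteness, indeed linearity, is known; alternatively one arranges this directly as in \cite{BG}. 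Applying profinite completion to the sequence and using right-exactness gives $\hat N\to\hat\G\to\hat Q\to 1$, and since $\hat Q=1$ the map $\hat N\to\hat\G$ is already surjective.

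The hard part is injectivity of $\hat N\to\hat\G$, equivalently that the profinite topology of $\G$ restricts to the full profinite topology of $N$. It suffices to fix a characteristic finite-index subgroup $M\triangleleft N$ (so $M\triangleleft\G$) and to show that the finite group $F:=N/M$ embeds in $\widehat{\G/M}$. Write $\bar\G:=\G/M$ and let $R$ be its finite residual. Because $\hat Q=1$, every finite quotient of $\bar\G$ is a quotient of $F$ (the image of $Q$ being a finite quotient of $Q$, hence trivial); consequently $\widehat{\bar\G}$ is finite, $F\to\widehat{\bar\G}$ is onto, $\bar\G=F\cdot R$, and $R$ maps onto $Q$ with finite kernel $A:=F\cap R\triangleleft\bar\G$. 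In the model case where $A$ is abelian, the action of $Q$ on $A$ factors through the finite group $\mathrm{Aut}(A)$ and so is trivial, so $1\to A\to R\to Q\to 1$ is central. Since $Q$ is perfect ($Q^{ab}=0$, as $Q$ is finitely generated with $\hat Q=1$) and $H_2(Q,\Z)=0$, universal coefficients give $H^2(Q,A)=0$, whence the extension splits: $R\cong Q\times A$. Then $[R,R]$ is the $Q$-factor (again by perfectness), so it is characteristic in $R$ and therefore normal in $\bar\G$; having no finite quotients, it must contain the finite residual, forcing $R=[R,R]$ and hence $A=1$. This yields $F\cap R=1$, the desired embedding.

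I expect this splitting step to be the main obstacle, for two reasons. It is exactly the point at which \emph{both} hypotheses on $Q$ are needed, so any correct proof must route through something like it; and the reduction to the clean central-extension picture is delicate, since a general characteristic $M$ produces a finite kernel $A$ that need not be abelian. The nonabelian case is not covered by the model argument above: one must either descend a chief series of $A$, handling abelian chief factors as here and nonabelian simple chief factors by a detection argument, or invoke the relevant profinite-completion lemma from \cite{BG} directly. I would organise the write-up so that the abelian case carries the conceptual content and the general finite quotient is reduced to it.

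Finally, $N$ is finitely generated by construction but I claim it is not finitely presentable. The idea is that $\G$ is a torsion-free hyperbolic group of cohomological dimension two while $Q=\G/N$ is infinite: were $N$ finitely presentable, hence of the appropriate $\mathrm{FP}$-type, the relations between the homological finiteness properties and cohomological dimensions of $N$, $\G$ and $Q$ would force $Q$ to be (virtually) free, and a nontrivial such group has nontrivial finite quotients, contradicting $\hat Q=1$ together with $Q$ infinite. The subtle point is upgrading finite presentability to the finiteness property that lets these dimension relations bite, which I would handle following \cite{BG} rather than from scratch. The final algorithmic clause is immediate, since every construction invoked above is explicit in the given presentation of $Q$.
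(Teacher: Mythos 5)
Your overall architecture coincides with the paper's (the Rips construction in Wise's residually finite form, surjectivity from $\hat Q=1$, and a splitting of a central extension via super-perfectness of $Q$), but the step you yourself identify as the crux is genuinely incomplete, and the missing idea is precisely the paper's reduction. The paper never confronts an arbitrary finite kernel: given a finite-index subgroup $I\subset N$ normal in $\Gamma$, it sets $M=\ker\bigl(\Gamma\to \mathrm{Aut}(N/I)\bigr)$. Since $\mathrm{Aut}(N/I)$ is finite, $M$ has finite index in $\Gamma$; since $Q$ has no proper subgroups of finite index, $M$ still maps \emph{onto} $Q$; and because $M$ centralizes $N/I$ by construction, the kernel $(N/I)\cap(M/I)$ of $M/I\to Q$ is central, hence automatically abelian --- there is no case analysis at all. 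Super-perfectness splits $1\to (N/I)\cap(M/I)\to M/I\to Q\to 1$, and $H=\ker\bigl(M\to M/I\to (N/I)\cap(M/I)\bigr)$ is a finite-index subgroup of $\Gamma$ with $H\cap N=I$, which is exactly what injectivity of $\hat N\to\hat\Gamma$ requires. Your abelian model case is essentially sound (though the justification ``having no finite quotients, it must contain the finite residual'' is backwards: the correct point is that $[R,R]$ has \emph{finite index} in $\bar\Gamma$, since $\bar\Gamma/R$ and $A$ are finite, and therefore contains the finite residual $R$). But your two proposed routes for nonabelian $A$ are not proofs: the chief-series descent is only named, not carried out (and it is delicate, e.g.\ the finite residual does not pass well to quotients, and for nonabelian kernels the splitting obstruction is no longer governed by $H^2(Q,A)$ with trivial coefficients); while ``invoke the relevant profinite-completion lemma from \cite{BG}'' is circular, since that lemma (going back to Platonov--Tavgen \cite{PT}) \emph{is} the statement whose proof is at issue here. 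As you observe, both hypotheses on $Q$ are consumed exactly at this point, so the gap sits at the heart of the theorem.

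The non-finite-presentability step is also mis-aimed. The relevant result is Bieri's dichotomy \cite{bieri}: a finitely presentable (indeed $\mathrm{FP}_2$) normal subgroup of a group of cohomological dimension $2$ is either free or of finite index; it does not conclude that $Q$ is virtually free, which is what your sketch asserts. Moreover your argument never uses the one property that makes the dichotomy bite: Wise's construction \cite{wise} guarantees that $N$ is \emph{not free}. This is essential, because finitely generated free normal subgroups of infinite index in cd-$2$ hyperbolic groups do exist (hyperbolic free-by-cyclic groups), so finite presentability of $N$ alone yields no contradiction. With ``$N$ not free'' in hand the paper's argument is immediate: $Q$ infinite forces $N$ to have infinite index, so if $N$ were finitely presentable Bieri would force it to be free, a contradiction. (Under your hypotheses one could alternatively rule out free $N$ using residual finiteness of $\mathrm{Out}(F_n)$ and triviality of centralizers of non-cyclic subgroups in torsion-free hyperbolic groups, but that is an additional argument you would need to supply, not the one you sketched.) Two minor points: Wise's algorithm outputs $N$ generated by three elements, not two, and it delivers residual finiteness directly, with no need for the virtually special refinement.
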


In order to make this theorem  useful one needs a supply of suitable
groups $Q$. This presents no difficulty since
one can embed any
finitely presented group $H$ in a finitely presented group $\overline H$
that has no finite quotients, matching the geometry and complexity of
$\overline H$ to that of $H$ in various ways \cite{mb-ep}, and the universal
central extension of $\overline H$ can then serve as $Q$ (see section 8
of \cite{BG}). Historically speaking, 
the first  suitable group $Q$ is
Higman's famous example $J=\langle a,b,c,d \mid aba^{-1}=b^2,\, bcb^{-1}=c^2,\, 
cdc^{-1}=d^2,\, dad^{-1}=d^2\rangle$.
Other small examples are constructed in \cite{BG}.

\medskip

Theorem \ref{main} is a consequence of the following two results. 

\begin{lemma}\label{PT} Let  $1\to N \to \G\to Q\to 1$ be a short exact 
sequence of finitely generated groups.
If $Q$ has no non-trivial finite quotients and $H_2(Q,\Z)=0$, then $N\to \G$
induces an isomorphism $\hat N\to \hat \G$.
\end{lemma}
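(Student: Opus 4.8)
The plan is to show that the induced map $\hat N\to\hat\G$ is both surjective and injective. Surjectivity is immediate from the hypothesis that $Q$ has no non-trivial finite quotients: for any finite-index normal subgroup $K\trianglelefteq\G$ the image of $N$ in $\G/K$ is normal with quotient $\G/NK$, which is a finite quotient of $Q$ and hence trivial; thus $NK=\G$, so $N$ maps onto every finite quotient of $\G$. Therefore the image of $N$ is dense in $\hat\G$, and since $\hat N$ is compact its image is closed, so $\hat N\to\hat\G$ is onto.

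For injectivity it suffices to prove that the full profinite topology on $N$ agrees with the topology induced from $\hat\G$: given any finite-index subgroup of $N$, I must produce $K\trianglelefteq\G$ of finite index with $K\cap N$ contained in it. Passing to the intersection of all subgroups of $N$ of bounded index, I may assume this subgroup is a characteristic finite-index subgroup $M$ of $N$; as $N\trianglelefteq\G$, such an $M$ is normal in $\G$. Set $E=\G/M$ and $F=N/M$, so $1\to F\to E\to Q\to 1$ is exact with $F$ finite. The goal becomes to exhibit a finite quotient of $E$ into which $F$ embeds, for then its kernel pulls back to the required $K$ with $K\cap N=M$. Because $Q$ has no non-trivial finite quotients, the conjugation map $E\to\mathrm{Aut}(F)$ composes to a map $E\to\mathrm{Out}(F)$ that kills $F$ and hence factors through $Q$; as $\mathrm{Out}(F)$ is finite this map is trivial, so every element of $E$ induces an inner automorphism of $F$. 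Consequently $E=F\cdot C$ with $C=C_E(F)$, and $F\cap C=Z(F)$, so $E/C\cong F/Z(F)$ already embeds the non-central part of $F$ into a finite quotient.

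It remains to separate $Z(F)$, and this is where $H_2(Q,\Z)=0$ enters. One checks that $Z(F)$ centralises both $F$ and $C$, hence lies in $Z(E)$, and that $C/Z(F)\cong E/F\cong Q$, giving a central extension $1\to Z(F)\to C\to Q\to 1$. Its five-term exact sequence in homology reads
\[ H_2(Q,\Z)\to Z(F)\to H_1(C,\Z)\to H_1(Q,\Z)\to 0. \]
Here $H_2(Q,\Z)=0$ by hypothesis, and $H_1(Q,\Z)=Q^{ab}=0$ since a non-trivial finitely generated abelian group would have a finite quotient; hence the natural map $Z(F)\to C^{ab}$ is an isomorphism, so $Z(F)\cap[C,C]=1$. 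Put $L=[C,C]$. Since $C\trianglelefteq E$, the subgroup $L$ is normal in $E$; moreover $F\cap L=(F\cap C)\cap L=Z(F)\cap L=1$, and $E/L$ is finite because $C/L\cong C^{ab}\cong Z(F)$ is finite and $(E/L)/(C/L)\cong E/C$ is finite. Thus $E/L$ is a finite quotient of $E$ into which $F$ embeds, which completes the proof of injectivity and hence of the lemma.

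The routine reductions aside, the crux of the argument is the final paragraph: controlling the centre $Z(F)$ of the finite kernel. The vanishing $H_2(Q,\Z)=0$, together with $Q^{ab}=0$, is precisely what forces the central extension defining $Z(F)\le E$ to be detected by the abelianisation of $C_E(F)$, and making this step precise is the part I expect to be the main obstacle.
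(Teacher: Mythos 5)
Your proof is correct, and it follows the same basic reduction as the paper's, with the key homological step executed by a different (more self-contained) device. In the paper one fixes a finite-index $I\le N$ normal in $\G$ and takes $M$ to be the kernel of the conjugation action $\G\to\mathrm{Aut}(N/I)$; your centralizer $C=C_E(F)$ is exactly the image $M/I$ of that kernel, and your central extension $1\to Z(F)\to C\to Q\to 1$ is literally the paper's extension $1\to (N/I)\cap(M/I)\to M/I\to Q\to 1$. Where the paper finishes by citing Milnor for the fact that central extensions of a superperfect group split, and takes $H$ to be the kernel of the resulting retraction $M\to (N/I)\cap(M/I)$, you instead run the five-term exact sequence to get $Z(F)\cong C^{\mathrm{ab}}$ and pass to the quotient by $L=[C,C]$. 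These endgames are equivalent: your isomorphism $Z(F)\to C^{\mathrm{ab}}$ yields a retraction $C\to Z(F)$ whose kernel is exactly $[C,C]$, i.e.\ the splitting $C=Z(F)\times[C,C]$, and since $H_1(Q)=0$ forces any complement to be perfect, the complement is unique and your $K$ is the paper's $H$. What your route buys is verifiability line by line: you derive the splitting rather than quote it, and you make explicit two points the paper compresses --- that $Q^{\mathrm{ab}}=0$ follows from finite generation together with the absence of finite quotients (so only the $H_2$ hypothesis need be stated), and that $E=FC$ follows from finiteness of $\mathrm{Out}(F)$, where the paper instead observes that the finite-index subgroup $M$ must surject onto $Q$. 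The paper's citation is shorter; your version is elementary modulo the five-term sequence, and both correctly reduce injectivity of $\hat N\to\hat\G$ to showing that the profinite topology of $\G$ induces the full profinite topology on $N$.
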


\begin{proof} 
The surjectivity of $\hat N\to \hat \G$ is an immediate consequence of the observation
that since $Q$ has no finite quotients, if $f:\G\to F$ is a homomorphism onto a finite group then
$f(N)=F$. For injectivity, it is enough to consider finite index subgroups $I\subset N$ 
that are normal in $\G$ and to find a finite index subgroup  $H\subset \G$ that intersects
$N$ in $I$. The action of $\G$ by conjugation on $N$ induces a map to
the automorphism group of $N/I$, with kernel $M$ say. Since $M$ has finite index in 
$\G$, it maps onto $Q$ and 
we have a central extension
$$
1\to (N/I)\cap (M/I) \to M/I\to Q\to 1.
$$
Since $Q$ is super-perfect (that is, $H_1Q=H_2Q=0$), this extension splits (see \cite{milnor},
pp.~43-47). Setting
$H$ equal to the kernel of the resulting homomorphism $M\to M/I \to (N/I)\cap (M/I)$
completes the proof.
\end{proof}

The second ingredient in the proof of Theorem \ref{main} is  Wise's variation
on the Rips construction (see \cite{rips}, \cite{wise}).

\begin{theorem}\label{rips} There exists an
algorithm that, given a finite presentation of a group
$Q=\<X\mid R\>$, constructs a  finite presentation $\<X\cup\{\nu_1,\nu_2,\nu_3\}
\mid S\>$ for a torsion-free, residually finite group $\Gamma$,
of cohomological dimension 2, that is 
hyperbolic in the sense of Gromov.
The
subgroup $N\subset\G$ generated by
$\{\nu_1,\nu_2,\nu_3\}$ is normal but not free,
and $\G/N\cong Q$.
\end{theorem}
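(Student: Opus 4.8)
The plan is to build $\Gamma$ from an explicit Rips-type small-cancellation presentation and then to secure residual finiteness by Wise's refinement. Write $Q=\langle X\mid R\rangle$. I adjoin three generators $\nu_1,\nu_2,\nu_3$ and let $S$ consist of: for every $x\in X$ and every $i\in\{1,2,3\}$ the two relators $x\nu_ix^{-1}U_{x,i}^{-1}$ and $x^{-1}\nu_ix\,V_{x,i}^{-1}$; and for every $r\in R$ one relator $r\,W_r^{-1}$; where each $U_{x,i},V_{x,i},W_r$ is a positive word in $\nu_1,\nu_2,\nu_3$. The first two families say that every $x\in X$ conjugates each $\nu_i$ back into $N=\langle\nu_1,\nu_2,\nu_3\rangle$, so $N$ is normal; modulo $N$ the relators $r\,W_r^{-1}$ become the relators of $Q$, so $\Gamma/N\cong Q$. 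Since $S$ is produced by a fixed recipe from $\langle X\mid R\rangle$, the construction is algorithmic.

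The whole geometric package follows once the positive words are chosen long and generic enough that the symmetrised closure of $S$ satisfies the metric condition $C'(1/6)$ with no relator a proper power; arranging this effectively is the combinatorial heart of Rips's argument, which I would reproduce. Granting $C'(1/6)$, standard small-cancellation theory gives that $\Gamma$ is word-hyperbolic and torsion-free, and by Lyndon's asphericity theorem the presentation $2$-complex is a $K(\Gamma,1)$, so $\operatorname{cd}\Gamma\le 2$. Equality holds because $\Gamma$ is one-ended — the conjugation relators tie every generator together — hence not free, and a torsion-free group of cohomological dimension at most $1$ is free.

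Two assertions remain. That $N$ is not free I would read off from the relations the construction induces among its generators, such as $\nu_i=U_{x,i}(V_{x,1},V_{x,2},V_{x,3})$: these long relations prevent $N$ from being free and, when $Q$ is infinite, in fact force $N$ to be finitely generated but not finitely presentable (a Lyndon–Hochschild–Serre argument with $\Gamma$ of type $F$), whence non-free since finitely generated free groups are finitely presentable. The genuinely delicate point — and the main obstacle — is residual finiteness, which $C'(1/6)$ does not supply on its own. Here I would follow Wise and choose the words not only to be $C'(1/6)$ but so that $\Gamma$ acts properly and cocompactly on a CAT(0) cube complex with the action special, i.e.\ with hyperplanes embedded, two-sided, and free of self- and inter-osculations; $\Gamma$ then embeds in a right-angled Artin group and is in particular residually finite (indeed linear). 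The real labour is the simultaneous bookkeeping that makes the small-cancellation inequalities, the specialness of the wall structure, and the identity $\Gamma/N\cong Q$ hold together; hyperbolicity, torsion-freeness, the cohomological dimension, and the extension structure are comparatively routine.
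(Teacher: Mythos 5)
The paper offers no proof of this theorem: it is imported wholesale from Rips \cite{rips} and Wise \cite{wise}, with the explicit algorithm recalled in \cite{BG}, Section 7, so what you are really attempting is a reconstruction of Wise's paper. Your Rips skeleton is the standard one and the routine assertions are fine in outline: the conjugation relators make $N$ normal, killing the $\nu_i$ gives $\Gamma/N\cong Q$, the recipe is visibly algorithmic, and $C'(1/6)$ with no proper-power relators yields hyperbolicity, torsion-freeness and an aspherical presentation complex, hence $\operatorname{cd}\Gamma\le 2$. But the two assertions that make the theorem worth stating are not delivered. For residual finiteness, ``choose the words so that $\Gamma$ acts properly and cocompactly on a CAT(0) cube complex with the action special'' restates the problem rather than solving it: constructing the wallspace and excluding self- and inter-osculations for a given Rips presentation is the entire content, you supply no mechanism, and you concede the bookkeeping is undone. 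It is also not the argument of \cite{wise}, which predates the theory of special cube complexes and secures residual finiteness by different, bespoke separability arguments. If you are willing to use later machinery, no special choice of words is needed at all: every $C'(1/6)$ group acts properly cocompactly on a CAT(0) cube complex (Wise's cubulation of small cancellation groups) and every cubulated hyperbolic group is virtually special (Agol), hence residually finite; that would repair your plan, but it replaces \cite{wise} by two much heavier citations, and as written your text provides neither these citations nor a construction.

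The second genuine gap is the non-freeness of $N$, where your argument is invalid as stated: relations satisfied by a generating set never preclude freeness --- the set $\{a,b,ab\}$ in the free group $F(a,b)$ satisfies a relation, yet the group is free --- so exhibiting relations such as $\nu_i=V_{x,i}(U_{x,1},U_{x,2},U_{x,3})$ proves nothing by itself. Your fallback also has the logical dependency backwards: in this very paper, the failure of finite presentability of $N$ is \emph{deduced} from non-freeness via Bieri's theorem \cite{bieri}, and a Lyndon--Hochschild--Serre argument cannot establish non-finite-presentability independently without prior control of $H_*(N)$ as a $Q$-module; moreover it says nothing when $Q$ is finite, where $N$ is finitely presentable yet must still be non-free. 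The repair is available from your own observation: the relations $\nu_i=V_{x,i}(U_{x,1},U_{x,2},U_{x,3})$ hold in $N$, and since $N$ is generated by $\nu_1,\nu_2,\nu_3$, its abelianization is a quotient of $\Z^3$ by (at least) the exponent vectors of these relations; Rips's freedom in choosing the positive words lets one arrange the exponent sums so that this quotient is finite. A non-trivial finitely generated group with finite abelianization is not free, and $\nu_i\neq 1$ by Greendlinger's lemma. This also repairs your vague one-endedness claim for $\operatorname{cd}\Gamma=2$: if $\Gamma$ were free, its finitely generated normal subgroup $N$ would be trivial or of finite index, hence free in either case, contradicting what was just shown.
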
  

The Rips-Wise algorithm, which is based on small-cancellation theory, is extremely explicit --- see
\cite{BG}, section 7.

The only assertion of Theorem \ref{main} that does
not follow immediately from Lemma \ref{PT} and Theorem \ref{rips} 
is the fact that $N$ is not finitely presentable. This is a special
case of Bieri's theorem that a finitely presentable normal subgroup of
a group of cohomological dimension 2 is either free or of finite index \cite{bieri}.

\subsection{In Answer to the Goldstein-Guralnick Question}\label{exs}

The following examples $A\hookrightarrow G$ provide a negative
answer to the question of Goldstein and Guralnick described
in the introduction. This question arose from a desire
to weaken the hypotheses in their generalisation \cite{GG} of Ayoub's splitting theorem \cite{ayo}.

\begin{example} Let $N$ and $\G$ be as in Theorem \ref{main} and let $B$ be
any finitely presented residually finite group. Let 
$G=\G\times B$ and let $A=N\times\{1\}$. Then $A\to G$ induces
an isomorphism $\hat A \to \overline A$ and $\hat G = \hat\G \times \hat B
= \overline A \times \hat B$. But $A$ is not (isomorphic to) a direct factor of $G$ or
any subgroup of finite index in $G$, since such direct factors are finitely presentable.
\end{example}

\smallskip

\begin{example} To obtain an example where $G$ and its subgroups of finite index have no non-trivial 
direct product decompositions whatsoever, one takes $B=\G$ in the above construction
and instead of defining $G$ to be  $\G\times\G$ one takes it to be
the fibre product $P\subset \G\times\G$ of the map
$\pi:\G\to Q$ in Theorem \ref{main}. More explicitly,
$P=\{(\gamma_1,\gamma_2)
\mid \pi(\gamma_1)=\pi(\gamma_2)\}$.
 It is proved in \cite{BBMS} that if $Q$  has
an Eilenberg-Maclane space $K(Q,1)$ with a finite 3-skeleton
(as Higman's group $J$ does, for example), then $P$ will be finitely presented. And it
is proved in \cite{BG} that $P\to\G\times\G$ induces an isomorphism of profinite
completions. By examining centralizers one can see that $P$ does not virtually split as a direct product
(see section 6 of \cite{BG}). 
It follows that the inclusion of $A=N\times\{1\}$ into $P$ maps $\hat A$ isomorphically  onto
the first factor of
 $\hat P = \hat \G\times \hat\G$, but neither $P$
nor any of its subgroups of finite index  decomposes as a non-trivial direct product.
\end{example}

\smallskip

\begin{example}  With $P$ and $\G$ as in the preceding example, $P\times\{1\}
\hookrightarrow (\Gamma\times\Gamma)\times
\Gamma$ provides  us with a pair of
{\em{finitely presented}} groups $A\hookrightarrow
G$ with $\hat A\cong \overline A$ a direct
factor of $\hat G$ but $A$ not a direct factor of
any subgroup of finite index in $G$. But in
this example $A$ is not normal in $G$.
\end{example}

\smallskip

\subsection{Examples of Nikolov-Segal Type
$G=A\rtimes\Z$}\label{ss:NS}

We continue with the notation established in
the proof of Lemma \ref{PT}, insisting now that $Q$ be infinite.
Since $Q$ has no finite quotients, the image of $\G$ in the automorphism
group of $N/I$ must coincide with that of $N$, in other words $\Gamma$ must act
on $N/I$ by inner automorphisms. Suppose that  $\G$
is torsion-free and hyperbolic, and note that $N$ cannot be 
cyclic as it has infinite index in its normalizer \cite{gromov}.
In this case, no element $\gamma
\in\Gamma\smallsetminus N$ can
act on $N$ (by conjugation in $\G$) as an inner automorphism, for if it acted
as conjugation by $n\in N$, say, then $n^{-1}\gamma$ would centralize $N$,
whereas non-cyclic subgroups of torsion-free hyperbolic groups have trivial
centralizers \cite{gromov}.

Thus, if $Q$ is torsion-free (Higman's group, for example) and $\G$ is
constructed as in Theorem \ref{rips}, then for every $\gamma\in \G\smallsetminus N$
we have $\langle N,\gamma \rangle = N\rtimes \langle \gamma \rangle \cong N\rtimes_\alpha\Z$
where no power of $\alpha$ is inner but $\alpha$ acts as an inner automorphism on
$N/I$ for every characteristic subgroup of finite index $I\subset N$ (and hence every
$\alpha$-invariant subgroup of finite index $K$, as one sees by considering the intersection
$I$ of all subgroups of index $|N/K|$). 

\section{The recognition problem for direct factors}

In this section we shall prove (a more precise version of) Theorem \ref{t:noAlgo}.
The seed of undecidability that we shall exploit in order to prove Theorem \ref{t:noAlgo}
comes from the following theorem, which is proved in \cite{PFdecide}.

\begin{thm}\label{t:triv} There exists a finitely
generated free group $F=F(X)$ and a recursive
sequence of finite subsets $\R_n\subset F$
so that there is no algorithm to
determine which of the groups $Q_n=F/\ln \R_n\rn$
is trivial, but each of the groups has the
following properties:
\begin{enumerate}
\item  $H_1(Q_n;\Z) = H_2(Q_n;\Z)=0$;
\item $Q_n$ has no non-trivial finite quotients.
\end{enumerate}
(If $Q_n\neq 1$ then $Q_n$ is infinite.)
\end{thm}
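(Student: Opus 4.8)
The plan is to reduce the assertion to the classical undecidability of the triviality problem for finitely presented groups, and then to run the two--step recipe described immediately after the statement of Theorem \ref{main}: embed a given finitely presented group $H$ into a finitely presented group $\overline H$ with no non-trivial finite quotients (as in \cite{mb-ep}, \cite{BG}), and pass to the universal central extension $Q(H)$ of $\overline H$. The point is that this composite $H\mapsto Q(H)$ always outputs a group satisfying conditions (1) and (2) that is infinite when non-trivial, while $Q(H)=1$ exactly when $H=1$; applying it to a suitable recursive family then transports the undecidability of triviality from the $H$'s to the $Q(H)$'s.

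First I would invoke the Adian--Rabin theorem in effective form: starting from a fixed finitely presented group with unsolvable word problem, Rabin's construction produces, uniformly in a word $w$, a finite presentation of a group $H_w$ on a fixed generating set (only the relators depending on $w$) such that $H_w=1$ if and only if $w=1$ in the fixed group. Enumerating words and setting $H_n:=H_{w_n}$ gives a recursive sequence of finite presentations, all on one generating set, for which $\{n:H_n=1\}$ is recursively enumerable but not recursive. I would then run the recipe above while keeping careful account of generators: the embedding $H_n\hookrightarrow\overline{H_n}$ adjoins a bounded number of generators and relators and can be arranged so that $\overline{H_n}=1$ precisely when $H_n=1$; and, since $\overline{H_n}$ is perfect (see below), its universal central extension is generated by lifts of the generators of $\overline{H_n}$, so it has the same rank. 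Thus all the $Q_n:=Q(H_n)$ are presented on a single free group $F=F(X)$, and I take $\R_n$ to be the resulting computable finite relator set, so that $Q_n=F/\ln\R_n\rn$.

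The stated properties then follow from generalities on universal central extensions. As $\overline{H_n}$ is finitely generated with no non-trivial finite quotients, its abelianisation is a finitely generated abelian group with no non-trivial finite quotients, hence trivial; so $\overline{H_n}$ is perfect and carries a universal central extension $1\to Z_n\to Q_n\to\overline{H_n}\to 1$ with $Z_n=H_2(\overline{H_n};\Z)$ central. Universal central extensions of perfect groups are super-perfect, so $H_1(Q_n;\Z)=H_2(Q_n;\Z)=0$, which is (1). For (2), let $P$ be a finite quotient of $Q_n$; the image of $Z_n$ is central in $P$, and $P$ modulo that image is a finite quotient of $\overline{H_n}$, hence trivial, so $P$ equals the image of $Z_n$ and is therefore abelian; but $P$ is also a quotient of the perfect group $Q_n$, whence $P=1$. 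The same computation gives $Q_n=1\iff\overline{H_n}=1$ (if $\overline{H_n}=1$ then $Z_n=H_2(1)=0$), and a non-trivial $\overline{H_n}$ with no finite quotients is infinite, so $Q_n$ is infinite whenever non-trivial. Chaining $Q_n=1\iff\overline{H_n}=1\iff H_n=1$ and recalling that $\{n:H_n=1\}$ is non-recursive while $n\mapsto\R_n$ is computable, any algorithm deciding triviality of the $Q_n$ would decide triviality of the $H_n$, a contradiction.

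The main obstacle is organising the three constructions so that they are simultaneously algorithmic, keep the generating set uniformly bounded, and preserve the relation ``trivial $\iff$ trivial''. The homological verifications are routine, but producing an explicit finite presentation of the universal central extension needs care: I would use the Hopf-formula model, writing $\overline{H_n}=\Phi/R$ with $\Phi$ free and presenting the universal central extension as $[\Phi,\Phi]/[\Phi,R]$, a group of the same rank as $\overline{H_n}$ (by perfectness) whose finitely many defining relators can be read off algorithmically from the presentation of $\overline{H_n}$. Checking that the embedding step of \cite{mb-ep} can be carried out with a uniformly bounded generating set and collapses exactly when its input does is the remaining point that must be nailed down.
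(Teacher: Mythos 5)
Your overall architecture coincides with the route the paper sketches (and defers to \cite{PFdecide} for): a recursive sequence of presentations with undecidable triviality, an algorithmic modification producing groups with no proper finite-index subgroups, and finally passage to the universal central extension, presented via the Hopf model $[\Phi,\Phi]/[\Phi,R]$ as in section 8 of \cite{BG}. Your third stage and all the homological verifications are correct: perfectness of $\overline{H_n}$ from the absence of finite quotients, superperfectness of the universal central extension, generation by lifts of the generators (the central-extension argument you give is right), the observation that a finite quotient of $Q_n$ is abelian yet perfect and hence trivial, the equivalence $Q_n=1\iff\overline{H_n}=1$, and infiniteness of any non-trivial group with no non-trivial finite quotients (which makes the torsion-freeness in the paper's first stage unnecessary for the statement as given).

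The genuine gap is at your second stage, and it is the crux of the theorem, not a detail to be ``nailed down''. You propose to use the embedding $H\hookrightarrow\overline H$ of \cite{mb-ep} and assert that it ``can be arranged so that $\overline{H_n}=1$ precisely when $H_n=1$''. The construction of \cite{mb-ep} does not have this property: it wraps $H$ inside auxiliary finitely presented groups with no non-trivial finite quotients (amalgams built from Higman-type groups), and these auxiliary pieces survive as subgroups of $\overline H$ whatever the input, so $\overline H$ is infinite even when $H=1$. Since the collapsing direction $H_n=1\Rightarrow Q_n=1$ is exactly what transports undecidability --- without it the set $\{n: Q_n=1\}$ could perfectly well be empty, hence recursive --- your reduction does not go through as written. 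Repairing it is the actual content of the second stage of \cite{PFdecide}: one must modify the presentations themselves, threading the input generators through the no-finite-quotient gadget (e.g.\ identifying them with elements that normally generate Higman-type groups) so that triviality of the input forces the auxiliary generators to die; and one must then prove the converse non-collapsing statement, that the modified group is non-trivial whenever the input group is. That converse is itself not automatic: the modified group is a quotient of a free product of Higman-type groups by the threaded relators, and showing the extra relations do not kill everything when $H_n\neq 1$ requires real control (of the sort provided by amalgam/malnormality or small-cancellation arguments), a point your sketch never addresses.
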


This theorem is proved in three stages. First one constructs a
sequence of finite group-presentations $\Pi_n\equiv \langle Y \mid \Sigma_n\>$
so that the groups presented are torsion-free and there is no algorithm
that can determine which are trivial. Secondly,
one modifies these presentations in an algorithmic manner so as to
ensure that none of the groups  presented has any proper subgroups of
finite index. Finally, an additional algorithm is implemented that replaces
$\Pi_n'$,
the modified $\Pi_n$, with a finite presentation $\tilde \Pi_n \equiv \<X\mid \R_n\>$
for the universal central extension of the group presented by $\Pi_n'$.
See \cite{PFdecide} for details.

\subsection{The Proof of Theorem \ref{t:noAlgo}}
Consider the sequence of pairs of groups
$N_n\hookrightarrow\G_n$ obtained by 
applying the Rips-Wise algorithm (Theorem \ref{rips}) to the presentations 
$\< X\mid \R_n\>$ from Theorem \ref{t:triv}. The output of the algorithm is a
recursive sequence of finite presentations
$\mathcal P_n \equiv \< X\sqcup\{\nu_1,\nu_2,\nu_3\}\mid S_n\>$
for $\G_n$, with
$N_n\subset\G_n$ given as
the subgroup generated by $\{\nu_1,\nu_2,\nu_3\}$.
Augmenting $\mathcal P_n$ with an additional generator $t$
and the relations $[t,x]=1$ for all $x\in X\cup\{\nu_1,\nu_2,\nu_3\}$
gives a recursive sequence of presentations $\mathcal P_n^+$,
for $G_n:=\Gamma_n\times\Z$ with $A_n:=N_n\times\{1\}$
the subgroup generated by $\{\nu_1,\nu_2,\nu_3\}$.

If $Q_n=1$ then $N_n=\G_n$. If $Q_n\neq 1$ then 
Theorem \ref{main} assures
us that the inclusion
$N_n\hookrightarrow \G_n$ still induces an isomorphism
 $\hat N_n\to\hat\G_n$, but $N_n$ is not finitely
presentable. Thus $A_n\hookrightarrow G_n$
always maps $\hat A_n$ isomorphically to the first
factor of $\hat G_n = \hat\G_n\times\hat\Z$, but
$A_n$ is a direct factor of $G_n$ (equivalently, some
subgroup of finite index in $G_n$) if and only
if $Q_n\neq 1$. And there is no algorithm that can determine
for which $n$ the group $Q_n$ is trivial.
\hfill $\square$

\smallskip
 
The version of Theorem  \ref{t:noAlgo} stated in the introduction was crafted so as
to be immediately comprehensible and free of technical jargon. We close with a
more technical statement that has greater precision. This is what is actually proved by
the preceding argument.

\begin{thm}\label{t:precise} There exists a finite set $Y=X\sqcup\{\nu_1,\nu_2,\nu_3,t\}$
and a recursive sequence  $(S_n)$ of finite sets of words in the letters $Y^{\pm 1}$
so that the groups $G_n:= F(Y)/\ln S_n\rn$, the subgroup $A_n\subset G_n$
generated by the image of $\{\nu_1,\nu_2,\nu_3\}$, and the subgroup
$B_n\subset G_n$ generated by the image of $\{t\}$, 
have the following properties:
\begin{enumerate}
\item each $G_n:= F(Y)/\ln S_n\rn$ is residually-finite and torsion-free;
\item each $B_n$ is infinite and $\hat G = \overline A_n\times \overline B_n$;
\item the inclusion $A_n\hookrightarrow G_n$ induces an isomorphism
$\hat A_n\to \overline A_n$;
\item the set $\{n \mid A_n \text{ is a direct factor of } G_n\}\subset\N$
is not recursive;
\item if $A_n$ is not a direct factor of $G_n$ then neither is it a direct factor of any subgroup of
finite index in $G_n$.
\end{enumerate}
\end{thm}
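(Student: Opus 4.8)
The plan is to observe that the sequence $G_n=\G_n\times\Z$, $A_n=N_n\times\{1\}$, $B_n=\<t\>\cong\Z$ produced in the proof of Theorem~\ref{t:noAlgo} already witnesses all five assertions, so the work is a matter of checking each clause against the results of Section~1. Concretely, I would fix $Y=X\sqcup\{\nu_1,\nu_2,\nu_3,t\}$ and let $S_n$ be the relation set of the augmented presentation $\mathcal P_n^+$: namely the Rips--Wise relations output by Theorem~\ref{rips} on input $\<X\mid\R_n\>$, together with $[t,y]=1$ for every $y\in Y\smallsetminus\{t\}$. Since Theorem~\ref{rips} is algorithmic and the augmentation by $t$ is purely mechanical, $(S_n)$ is recursive and $G_n=F(Y)/\ln S_n\rn=\G_n\times\Z$, so the data is of the asserted shape.

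Clauses (1)--(3) then follow directly. For (1), Theorem~\ref{rips} gives that $\G_n$ is torsion-free and residually finite; as $\Z$ shares both properties and each is inherited by finite direct products, $G_n$ is torsion-free and residually finite. For (3), the composite $A_n\hookrightarrow G_n\to\G_n$ is exactly the inclusion $N_n\hookrightarrow\G_n$, which induces an isomorphism $\hat N_n\to\hat\G_n$ by Theorem~\ref{main}; hence $\hat A_n\to\hat G_n$ is injective, and since $\hat A_n$ is compact its image is closed and equals $\overline A_n=\hat\G_n\times\{1\}$, so $\hat A_n\to\overline A_n$ is an isomorphism. Clause (2) is then the remark that $B_n$ is infinite and that, because profinite completion commutes with finite products, $\hat G_n=\hat\G_n\times\hat\Z$ is the internal direct product of $\overline A_n=\hat\G_n\times\{1\}$ and $\overline B_n=\{1\}\times\hat\Z$.

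The substance of the theorem is clause (4), for which I would establish the equivalence that $A_n$ is a direct factor of $G_n$ if and only if $Q_n=1$. If $Q_n=1$ then $\G_n/N_n\cong Q_n$ is trivial, so $N_n=\G_n$ and $A_n=\G_n\times\{1\}$ is literally the first factor of $G_n$. Conversely, if $Q_n\neq1$ then $Q_n$ is infinite (Theorem~\ref{t:triv}), so Theorem~\ref{main} guarantees that $N_n$ is finitely generated but not finitely presentable; since a direct factor of the finitely presented group $G_n$ is finitely presentable, $A_n\cong N_n$ cannot be a direct factor. Thus $\{n\mid A_n\text{ is a direct factor of }G_n\}=\{n\mid Q_n=1\}$, which is non-recursive by Theorem~\ref{t:triv}; as recursive sets are closed under complementation, the conclusion is insensitive to the direction of the equivalence.

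Clause (5) is the same finite-presentability obstruction applied to finite-index subgroups. Supposing $Q_n\neq1$, so that $A_n$ is not a direct factor of $G_n$, I would let $H\leq G_n$ have finite index with $A_n\leq H$ (the only case in which $A_n$ could be a direct factor of $H$); a finite-index subgroup of the finitely presented group $G_n$ is itself finitely presented, so every direct factor of $H$ is finitely presentable, whereas $A_n\cong N_n$ is not. The only genuinely external inputs are Theorem~\ref{t:triv} and the non-finite-presentability half of Theorem~\ref{main}, which rests on Bieri's theorem; accordingly I expect the sole point needing care to be the passage in (4) and (5) from ``$N_n$ is not finitely presentable'' to ``$A_n$ is not a direct factor'', i.e.\ the standard fact that a direct factor of a finitely presented group is finitely presented.
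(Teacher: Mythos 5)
Your proposal is correct and follows essentially the same route as the paper: Theorem~\ref{t:precise} is proved there by exactly the construction you give (the augmented Rips--Wise presentations $\mathcal P_n^+$ for $G_n=\G_n\times\Z$ built from Theorem~\ref{t:triv}), with clauses (1)--(3) from Theorems~\ref{rips} and~\ref{main} and clauses (4)--(5) from the finite-presentability obstruction and the undecidability of triviality of $Q_n$. Your only divergence is an improvement: you state the equivalence in the correct direction ($A_n$ is a direct factor iff $Q_n=1$, the paper's ``if and only if $Q_n\neq 1$'' being a slip), and you make explicit the standard fact, used only implicitly in the paper, that a direct factor of a finitely presented group is finitely presentable.
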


\end{document}